\begin{document}
\title*{Morse structures on partial open books with extendable monodromy}
\author{Joan E. Licata and Daniel V. Mathews}
\institute{Joan E. Licata \at Mathematical Sciences Institute, The Australian National University, \email{joan.licata@anu.edu.au}
\and Daniel V. Mathews \at School of Mathematical Sciences, Monash University \email{Daniel.Mathews@monash.edu}}
%
%
\maketitle

\abstract{The first author in recent work with D. Gay developed the notion of a \emph{Morse structure} on an open book as a tool for studying closed contact 3-manifolds. We extend the notion of Morse structure to \emph{extendable partial} open books in order to study contact 3-manifolds with convex boundary.
}

\def\jel#1{{\textcolor{red}{#1}}}
\def\dm#1{{\textcolor{blue}{#1}}}


\section{Introduction}

In \cite{Gay_Licata15}, the first author and David Gay developed the notion of a \emph{Morse structure} on a closed 3-manifold with an open book decomposition. Informally, a Morse structure is a nice family of functions and vector fields on the pages of the open book: the functions are Morse functions on the pages, and the vector fields are gradient-like and Liouville in an appropriate sense. In \cite{Gay_Licata15} it was shown that every open book admits a Morse structure. 

The same paper \cite{Gay_Licata15} also developed the notion of a \emph{Morse diagram}. This is a diagram consisting of some tori, one for each binding component, with some curves and decorations drawn on them. A Morse structure on an open book has a Morse diagram, and \cite{Gay_Licata15} (prop. 3.7) showed that every abstract Morse diagram arises as the Morse diagram of an open book. This gives a graphical description, encoded by a finite amount of combinatorial data, of an open book and hence of a contact structure.

Morse structures and diagrams give a useful way to study \emph{Legendrian knots and links} in a closed contact 3-manifold. A Legendrian knot or link in the standard contact $\mathbb{R}^3$ can be studied via its \emph{front projection}, which projects the knot into a plane, and whose distance from the plane at any point is determined by the slope of the projection. In an analogous way, a Morse structure allows one to define a front projection for (almost) any Legendrian knot or link in any contact manifold. By flowing the link to a neighbourhood of the binding, one obtains a \emph{front} for the link on the associated Morse diagram, and the slope of the diagram at any point determines the ``distance" of the link from the binding. Fronts were defined in \cite{Gay_Licata15}, along with a set of ``Reidemeister moves": two Legendrian links represented by fronts are Legendrian isotopic if and only if their fronts are related by such moves.

The purpose of this short article is to explore a simple idea: what happens if we look at \emph{partial} open books defined by \emph{restrictions} of the monodromies in the closed case? We examine the consequences of \cite{Gay_Licata15} in this context, and extend the results to a large family of contact 3-manifolds with convex boundary. We generalise \cite{Gay_Licata15} to partial open books whose monodromy is \emph{extendable} to the monodromy of an open book in the usual (non-relative) sense.

\emph{Partial open books} were introduced by Honda--Kazez--Mati\'{c} in \cite{HKM09}. They are related to open books in the same way that contact 3-manifolds with convex boundary are related to closed contact 3-manifolds. In \cite{HKM09} Honda--Kazez--Mati\'{c} stated a relative version of the Giroux Correspondence between contact manifolds and open books \cite{Gi02}, which was also expounded by Etg\"{u}--Ozbagci in \cite{Etgu_Ozbagci11}.

Following \cite{Gay_Licata15}, define  a contact manifold $W$ with a contact form $\alpha$ by
\[
W = (0, \infty) \times S^1 \times S^1, \quad
\alpha = dz + x \; dy,
\]
where $x,y,z$ are coordinates on the three factors of $W$. We prove the following.

\begin{theorem}\label{thm:intro}
Let $(M, \Gamma, \xi)$ be a contact 3-manifold with convex boundary, presented by the partial open book $(S,P,h)$, with binding $B$. There is a 2-complex $\text{Skel} \subset \text{Int} M$ with the property that, after modifying $\xi$ by an isotopy through contact structures presented by $(S,P,h)$, the interior of each connected component of $(M \backslash \left( \text{Skel} \cup B  \right), \xi)$ is contactomorphic to a contact submanifold of $W$.
\end{theorem}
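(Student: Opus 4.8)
My proof proposal:

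The plan is to leverage the closed-case results of \cite{Gay_Licata15} by using extendability to place the partial open book inside a genuine (closed) one. Since the monodromy $h \colon P \to S$ extends to a homeomorphism $\hat h \colon S \to S$ fixing $\partial S$, the pair $(S, \hat h)$ is an honest open book presenting a closed contact manifold $(\hat M, \hat\xi)$, and $(S,P,h)$ is recovered as the ``sub-open-book'' of $(S,\hat h)$ in which one performs the mapping-torus gluing only over $P$. Concretely, I would realise $M$ as a codimension-$0$ submanifold of $\hat M$ whose boundary $\partial M$ is the convex surface assembled from $\partial S$, $\partial P$ and the dividing set $\Gamma$, with the partial binding $B$ sitting inside the closed binding $\hat B$ (the part of $\partial S$ not swept through the $P$-gluing). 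The payoff of this step is that the Morse-structure technology of \cite{Gay_Licata15} applies verbatim to $(S,\hat h)$, and the data I need on $M$ will be obtained by restriction.

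Applying \cite{Gay_Licata15}, I would equip $(\hat M, \hat\xi)$ with a Morse structure: a page-wise family of Morse functions together with gradient-like, Liouville vector fields, chosen compatibly with $\hat h$, which is precisely what extendability supplies. The associated skeleton $\widehat{\mathrm{Skel}}$ is the $2$-complex swept out, as the pages rotate around $S^1$, by the descending (stable) sets of the Liouville vector field on each page; off $\widehat{\mathrm{Skel}} \cup \hat B$ the Liouville and Reeb-type flows carry every point monotonically toward the binding. I would then set $\mathrm{Skel} := \widehat{\mathrm{Skel}} \cap \mathrm{Int}\,M$, arranging by a preliminary adjustment that $\widehat{\mathrm{Skel}}$ meets $\partial M$ transversally and away from the corners. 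The modifying isotopy of $\xi$ is the restriction to $M$ of the isotopy producing the Morse structure on $\hat M$; since that isotopy is supported in the interiors of the pages and respects the fibration, it is through contact structures presented by $(S,P,h)$.

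For the contactomorphism I would show that each component $U$ of $M \setminus (\mathrm{Skel} \cup B)$ lies inside a single component $\hat U$ of $\hat M \setminus (\widehat{\mathrm{Skel}} \cup \hat B)$. By the closed case, $\mathrm{Int}\,\hat U$ is contactomorphic to a contact submanifold of $W$; restricting that contactomorphism to the open subset $\mathrm{Int}\,U \subset \mathrm{Int}\,\hat U$ exhibits $\mathrm{Int}\,U$ as a contact submanifold of $W$ as well, since an open contact submanifold of an open contact submanifold of $W$ is again one. Here the coordinates are the natural ones: $z$ runs along the binding, $y$ is the page-angle of the open book, and $x$ is the normalised Liouville-time (distance-to-binding) coordinate, in which the contact form acquires the normal form $dz + x\,dy$.

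The main difficulty, and the genuinely new point beyond the closed case, is controlling the interaction of the construction with $\partial M$. I must choose the Morse structure on $(S,\hat h)$ so that the convex boundary sits in ``Morse position'': the dividing set $\Gamma$ must be compatible with the Liouville flow, and the truncation of a model region $\hat U$ by $\partial M$ must again be a region embeddable in $W$ rather than a piece that wraps around or degenerates. Guaranteeing this — that $\partial M$ is transverse to the flow and to $\widehat{\mathrm{Skel}}$, and that the resulting pieces $U$ stay connected, product-like, and flow cleanly to $B$ — is where the bulk of the work lies, and it is also the second place where extendability is used: to ensure that the flow near $\partial S$ is standard enough that the cut pieces do not escape the model $W$.
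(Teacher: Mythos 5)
Your proposal is correct and takes essentially the same route as the paper: use extendability to realise $M(S,P,h)$ as a contact submanifold of the closed manifold $M(S,\tilde h)$, invoke the Gay--Licata Morse structure and its flow-defined contactomorphism there, and obtain the theorem by restricting the skeleton, the compatible contact form, and the contactomorphism to $M$. The only real divergence is in the last step: where you use the soft fact that an open contact subset of a piece contactomorphic to $W$ is again a contact submanifold of $W$ (which does suffice for the statement), the paper instead identifies each piece explicitly inside $W$ --- a full copy of $W$ for closed binding components, and otherwise $W$ minus slabs $(0,\infty)\times I\times(0,1)$ over the intervals $I\subset\partial S\setminus A$ and minus collars $\bigl(0,g(y,z)\bigr)\times K\times(0,1)$, the latter using that backward flowlines exit $P$ in finite time since $R$ contains neighbourhoods of the index-$0$ critical points --- and the boundary-transversality/``Morse position'' work you flag as the bulk of the remaining effort is not actually needed for the statement (which concerns only interiors of components and arbitrary contact submanifolds of $W$) and appears nowhere in the paper's argument.
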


Once sufficient notation has been established, in Section~\ref{sec:contacto} we  give a more precise description of these submanifolds in terms of the defining data $(S,P,h)$ of an abstract open book defining $(M, \Gamma, \xi)$.  

In Section~\ref{sec:diagram} we define a Morse structure for an extendable partial open book $(S,P,h)$.  A Morse structure consists of a function $F$ and a vector field $V$, and this data can be used to define a \textit{Morse diagram}, which is a decorated surface consisting of tori, punctured tori and annuli.  A Morse diagram can be viewed as gluing instructions for assembling $\text{Skel}$ and submanifolds of $W$ into the original manifold $M$. The components of the Morse diagram are properly  embedded in $M$ and transverse to the vector field $V$ along the pages of the partial open book. The flow of $V$ assigns to points in the complement of $\text{Skel}$  and the binding a well-defined image on the Morse diagram, which we call a \textit{front}.

\begin{theorem}
\label{thm:Legendrian_tangles}  
If $\Lambda$ is a properly embedded Legendrian tangle in $(M, \xi)$ disjoint from the binding $B$ and transverse to $\text{Skel}$, then the front associated to $\Lambda\setminus \text{Skel}$ completely determines  $\Lambda$.  Consequently, any two Legendrian tangles with the same front are equal.

\end{theorem}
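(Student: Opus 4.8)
The plan is to construct an explicit inverse to the front map: from the front of $\Lambda \setminus \text{Skel}$ I will reconstruct $\Lambda \setminus \text{Skel}$ as a subset of $M \setminus (\text{Skel} \cup B)$, and then recover $\Lambda$ by taking its closure. The first step is to reduce to the model $W$. By Theorem~\ref{thm:intro} (after the isotopy it provides), the interior of each connected component $N$ of $M \setminus (\text{Skel} \cup B)$ is contactomorphic to a contact submanifold of $W = (0,\infty) \times S^1 \times S^1$ with $\alpha = dz + x\, dy$. Under this identification I expect the flow of $V$ defining the front to realise the projection $\pi : (x,y,z) \mapsto (y,z)$ onto the $(y,z)$-torus, so that the front of $\Lambda \cap N$ is the image $\pi(\Lambda \cap N)$ together with its slope data. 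Verifying this correspondence precisely --- that flowing along $V$ to the relevant component of the Morse diagram realises the forgetful projection $\pi$ and records the distance-from-binding coordinate $x$ as a slope --- is the first substantive task, and draws on the explicit description of $V$ and of the Morse diagram from Section~\ref{sec:diagram}.

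Granting this, the reconstruction on each $N$ is the standard front-projection recovery. Along a Legendrian arc the condition $\alpha|_\Lambda = 0$ reads $dz + x\, dy = 0$, so that
\[
x = -\frac{dz}{dy},
\]
i.e. the distance coordinate is minus the slope of the front in the $(y,z)$-torus. Hence at every point where $dy \neq 0$ the full triple $(x,y,z)$, and therefore the point of $\Lambda$, is determined by the front curve together with its slope. A crossing of the front corresponds to two arcs of $\Lambda$ with the same $(y,z)$ but distinct slopes; since their recovered $x$-values then differ, the two strands are distinguished and indeed lie at different heights, consistent with $\Lambda$ being embedded.

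It remains to treat the degenerate points and to pass from $\Lambda \setminus \text{Skel}$ to $\Lambda$. At the cusps of the front, where $dy = dz = 0$ to leading order, the value of $x$ is recovered as the finite limit of $-dz/dy$, so the reconstruction extends continuously across these isolated points; thus $\Lambda \cap N$ is determined on all of $N$. Carrying this out on every component shows that the front determines $\Lambda \setminus \text{Skel}$ as a subset of $M \setminus (\text{Skel} \cup B)$. Finally, since $\text{Skel}$ is a $2$-complex and $\Lambda$ is properly embedded and transverse to it, the set $\Lambda \cap \text{Skel}$ is discrete and $\Lambda = \overline{\Lambda \setminus \text{Skel}}$; the determination of $\Lambda \setminus \text{Skel}$ therefore determines $\Lambda$, and two tangles with the same front coincide.

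I expect the main obstacle to be the first step: pinning down precisely how the flow of $V$ realises the projection $\pi$ and encodes $x$ as a slope, uniformly across the pieces and compatibly with the way the Morse diagram is glued along $\text{Skel}$ and $B$. Once the front is identified with the model projection, the recovery of the missing coordinate from the slope, the resolution of crossings, and the continuity at cusps and across the skeleton are all local and follow the familiar pattern of front projections in the standard contact $\mathbb{R}^3$.
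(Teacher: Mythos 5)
Your proposal is correct and follows essentially the same route as the paper: both reduce via the contactomorphism of Theorem~\ref{thm:intro} to the model $W$, viewed as a quotient of the half-space $\{x>0\}$ in standard $\mathbb{R}^3$, identify the flow of $V$ with the classical front projection along the Liouville field $x\partial_x$, and recover the missing coordinate from the slope via $x = -dz/dy$. The paper's proof is terser---it simply invokes the classical fact that fronts determine Legendrian curves---whereas you additionally spell out the crossing/cusp analysis and the closure argument passing from $\Lambda\setminus\text{Skel}$ to $\Lambda$, details the paper leaves implicit.
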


Fronts can effectively distinguish Legendrian tangles up to Legendrian isotopy.

\begin{theorem}
\label{thm:Legendrian_moves} The set of moves shown in Figure~\ref{fig:Legendrian_moves} has the property that two Legendrian tangles in $(M, \Gamma, \xi)$ are Legendrian isotopic if and only if their fronts are related by a sequence of moves and by isotopy preserving sufficiently negative slope.
\end{theorem}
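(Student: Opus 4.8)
The plan is to prove the two implications separately, treating the discrete moves and the slope-preserving isotopies as the two mechanisms that generate Legendrian isotopy at the level of fronts. For the \emph{if} direction, I would first check that each individual move in Figure~\ref{fig:Legendrian_moves} is realised by a Legendrian isotopy supported in a small ball: using the contactomorphism of Theorem~\ref{thm:intro} with a contact submanifold of $(W,\alpha)$, each local model for a move becomes an explicit compactly supported Legendrian isotopy in $(0,\infty)\times S^1 \times S^1$ with $\alpha = dz + x\,dy$, where the front is the projection forgetting $x$ and the slope records $-x$. Concatenating such isotopies shows that fronts related by a sequence of moves represent Legendrian-isotopic tangles. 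For the remaining relation, I would argue that an ambient isotopy of fronts preserving sufficiently negative slope lifts to a Legendrian isotopy: since Theorem~\ref{thm:Legendrian_tangles} guarantees that a front determines its tangle uniquely, a path of fronts determines a path of tangles, and the slope condition ensures this path stays within the region where the lift is unobstructed and avoids both $B$ and $\text{Skel}$.

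The harder \emph{only if} direction follows the standard discriminant strategy. Given a Legendrian isotopy $\{\Lambda_t\}_{t\in[0,1]}$ between two tangles, I would first perturb it, keeping each $\Lambda_t$ a properly embedded Legendrian tangle disjoint from $B$ and transverse to $\text{Skel}$, so that the associated family of fronts $t \mapsto \mathrm{front}(\Lambda_t \setminus \text{Skel})$ is a generic path in the space of fronts. Genericity, established by a transversality argument in an appropriate jet space, guarantees that the set of parameters at which the front fails to be generic is finite, and that at each such parameter the front undergoes a single codimension-one degeneration. Between consecutive degenerations the front varies by an isotopy, which the genericity allows me to take to preserve sufficiently negative slope.

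The crux is then the classification of these codimension-one events and the verification that each is one of the moves in Figure~\ref{fig:Legendrian_moves}. Here I would separate three sources of degeneration: the classical front singularities (cusp births and deaths, triple points, self-tangencies), which yield the Reidemeister-type moves; the degenerations in which a strand interacts with the global topology of the Morse diagram, wrapping around a torus, crossing a puncture, or running over an annular end, which yield the diagram-specific moves; and the events in which a strand interacts with the sufficiently-negative-slope collar near $B$, each of which must be shown either to be realisable by a slope-preserving isotopy or to appear among the listed moves. Faithfulness (Theorem~\ref{thm:Legendrian_tangles}) is used throughout to ensure that each change of front is genuinely induced by the isotopy and, conversely, that realising a move recovers the correct tangle.

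The main obstacle I anticipate is obtaining a \emph{complete} list of codimension-one degenerations in the presence of the structure absent from the closed setting of \cite{Gay_Licata15}: the two-complex $\text{Skel}$, the convex boundary reflected in the punctured-tori and annular pieces of the Morse diagram and in the tangle's endpoints, and the binding encoded by the slope condition. To control the closed-surface part of this classification I would exploit extendability, extending $(S,P,h)$ to a genuine open book and embedding $(M,\Gamma,\xi)$ into the resulting closed contact manifold so as to import the corresponding classification from \cite{Gay_Licata15}. The boundary collar would then be analysed directly in $W$, and the two analyses reconciled using the slope-preserving isotopies, so that no degeneration escapes the list in Figure~\ref{fig:Legendrian_moves}.
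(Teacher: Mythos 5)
Your proposal is correct and follows essentially the same strategy as the paper: both reduce the interior behaviour of fronts to the closed case of \cite{Gay_Licata15} (where the S, H, K moves are already known to be complete), exploit extendability of the monodromy to make that reduction legitimate, and then treat the interaction of the tangle with the boundary of the Morse diagram as a separate analysis producing the additional boundary moves. The paper simply leaves the discriminant/genericity scaffolding implicit in the citation and packages the boundary events as moves N0--N2, observing that each is the restriction of a planar isotopy on the Morse diagram of the extended open book --- exactly the reconciliation you describe carrying out directly in $W$.
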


We illustrate the ideas with an example adapted from  \cite{Etgu_Ozbagci11}; see figure~\ref{fig:eoex}. The right hand figures show $P\subset S$ and $h(P)\subset S$. The gluing map $h$ extends to a homeomorphism of $S$ which is a given by a Dehn twist around a curve parallel to the exterior boundary component.  The three boundary components  of $S$ each correspond to a component of the Morse diagram shown on the left, and the thin curves encode the extended monodromy.  The bold curve on the Morse diagram is a front projection of a Legendrian tangle with one closed component and one properly embedded interval component.

\begin{figure}
\begin{center}
\includegraphics[scale=0.6]{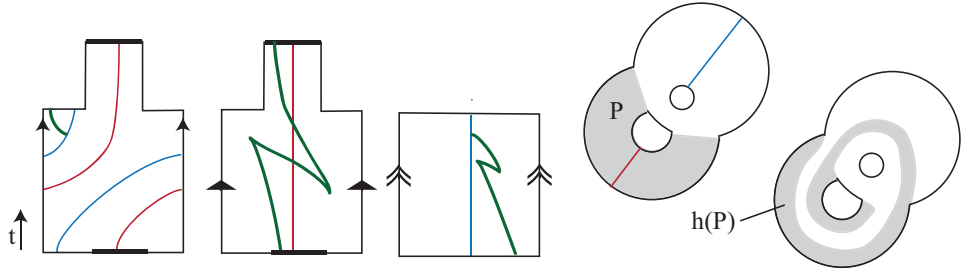}
\caption{Morse diagram for the extendable open book $(S,P, h)$, shown with a front for a Legendrian tangle (bold).  The bold segments at the top and bottom are identified, as are vertical edges as indicated by arrows.}
\label{fig:eoex}
\end{center}
\end{figure}

We conclude this section with a brief remark about gluing.  Contact manifolds   may be glued along compatible convex boundaries, and the simplest case of this is gluing contact manifolds which are products.  This gluing can be represented on the Morse diagram level by stacking Morse diagrams. Front projection of Legendrian tangles also behaves nicely under this operation.  In the special case of tangles braided with respect to the product structure, front projection offers a new tool for studying Legendrian braids in product manifolds.

\begin{acknowledgement} 
The authors would like to acknowledge the support and hospitality of MATRIX during the workshop  \textit{Quantum Invariants and Low-Dimensional Topology}.
The second author is supported by Australian Research Council grant DP160103085.

\end{acknowledgement}

\section{Partial open books}

We follow the definition of partial open books in \cite{Etgu_Ozbagci11}. 
All handles will be assumed two-dimensional, so a \emph{0-handle} is  a closed disc $D^2$ and a \emph{1-handle} is a closed oriented 2-disc of the form $P_0 = [-1,1] \times [-1,1]$.  To add a $1$-handle  to an oriented surface $S$, select an embedded $0$-sphere $\{p,q\} \in \partial S$ called the \textit{attaching sphere} and identify a regular neighbourhood of $p,q$ with  $[-1,1] \times \{-1,1\} \subset P_0$ in an orientation-preserving fashion. Any connected oriented surface with nonempty boundary can be constructed by successively attaching 1-handles to 0-handles. The \emph{core} of a handle is $\{0\} \times [-1,1]$ and the \emph{co-core} is $[-1,1] \times \{0\}$. We note that a handle attachment may be undone by cutting an attached handle through its co-core and deformation retracting it onto its attaching intervals.

Throughout this paper, $(S,P)$ denotes a pair of compact oriented surfaces, with $P \subseteq S$, $S$ connected and $\partial S \neq \emptyset$. We allow $P = \emptyset$ and $P = S$.

\begin{definition}
\label{def:handle_structure}
A \emph{handle structure} compatible with $(S,P)$ is a sequence of 1-handles $P_1, P_2, \ldots, P_r$ in $S$ such that $P = P_1 \cup \cdots \cup P_r$ and $S$ is obtained from $\overline{S \backslash P}$ by successively attaching 1-handles $P_1, \ldots, P_r$.
\end{definition}
When we have such a handle structure, for convenience we write $R = \overline{S \backslash P}$. Thus $S$ is obtained form $R$ by attaching the 1-handles of $P$. Note then that each component of $\partial P$ is either a component of $\partial S$ or a concatenation of arcs alternating between  $\partial P \cap \partial S$ and $\partial P \backslash \partial S$. We will denote $A = \partial P \cap \partial S$.

\begin{definition}
An \emph{abstract partial open book} is a triple $(S,P,h)$ where $(S,P)$ admits a compatible handle structure and $h : P \rightarrow S$ is a homeomorphism onto its image such that $h$ is the identity on $A$.
\end{definition}
The function $h$ is called the \emph{monodromy}. Note when $P=\emptyset$, $h$ is the null function. When  $P=S$, $h$ is a homeomorphism of $S$ to itself fixing the boundary, and we obtain an (abstract) open book in the usual sense.

This definition of abstract partial open book differs slightly from Honda--Kazez--Mati\'{c} in \cite{HKM09}, who consider pairs $(S,P)$ where $P$ is a subsurface of $S$ such that each component of $\partial P$ is either contained in $\partial S$ or is polygonal with every second side in $\partial S$. As noted above, any $(S,P)$ admitting a compatible handle structure has this form, but the \cite{HKM09} definition also allows bigon components of $P$ with one side in $A$. Such a \emph{boundary-parallel bigon} deformation retracts into $A$ and one can show that the resulting contact manifold is contactomorphic to the original one. In effect, then, the definitions are equivalent.

Clearly the existence of a compatible handle structure on $(S,P)$ restricts  the topology of $S$ and $P$. For the reasons discussed above,  no component of $\partial P$ can lie in $\text{Int} S$, and no component of $P$ is a boundary-parallel bigon.

Following  \cite{Etgu_Ozbagci11}, from a partial open book decomposition $(S,P,h)$ we  construct a sutured 3-manifold as follows. We define two handlebodies by thickening $S$ and $P$ and collapsing portions of their boundaries:  
\[
H = \frac{S \times [-1,0]}{\text{$(x,t) \sim (x,t')$ for $x \in \partial S$ and $t \in [-1,0]$}}
\]
\[
N = \frac{P \times [0,1]}{\text{$(x,t) \sim (x,t')$ for $x \in A$ and $t \in [0,1]$}}.
\]
(Note we only collapse the part of the boundary along $A = \partial P \cap \partial S$, leaving $(\partial P \backslash \partial S) \times [0,1]$ unscathed.) Now glue these two handlebodies together,  along both the common $P \times \{0\} \subseteq S \times \{0\}$ and also by identifying points $(x,1) \sim (h(x), -1)$ for $x \in P$.

The resulting manifold is denoted $M(S,P,h)$. It has boundary given by
\[
R \times \{0\} \cup \overline{(-S \backslash h(P))} \times \{-1\} \cup (\partial P \backslash \partial S) \times [0,1]
\]
and \emph{binding given by $B = S \times \{0\}$,}
modulo the identifications above, and thus has a sutured structure, with sutures $\Gamma$ and complementary regions $R_\pm$ given by
\begin{gather*}
\Gamma = \overline{\partial P \backslash \partial S} \times \{1/2\} \cup \overline{\partial S \backslash \partial P} \times \{-1/2\}, \\
R_+ = R \times \{0\} = \overline{S \backslash P} \times \{0\}, \quad
R_- = \overline{-S \backslash h(P)} \times \{-1\}.
\end{gather*}
Since $h$ is a homeomorphism onto its image, $\chi(R_+) = \chi(R_-)$, so $M(S,P,h)$ is a \emph{balanced} sutured manifold in the sense of \cite{Ju06}. The sutured structure on the boundary of $(M, \Gamma)$ is equivalent to the structure of a dividing set for a convex surface in a contact manifold \cite{Gi91}.

Indeed, to a partial open book $(S,P, h)$  we associate a contact manifold with convex boundary (up to contactomorphism), given by $M(S,P,h)$, with the unique (isotopy class of) contact structure whose restrictions to $H$ and $N$ are both tight, with dividing sets $\partial S \times \{-1/2\}$ and $\partial P \times \{1/2\}$ respectively \cite{Etgu_Ozbagci11, Torisu00}. Thus we regard $M(S,P,h)$ as a contact manifold. 
 
Following \cite{Etgu_Ozbagci11}, two partial open books $(S,P, h)$ and $(\overline{S}, \overline{P}, \overline{h})$ are said to be \textit{isomorphic} if there is a diffeomorphism $g:S\rightarrow \overline{S}$ such that $g(P)=\overline{P}$ and $\overline{h}=g\circ h\circ (g^{-1})|_{\overline{P}}$.
The \emph{relative Giroux Correspondence} establishes a bijection between isomorphism classes of partial open book decompositions, up to positive stabilisation, and compact contact 3-manifolds with convex boundary, up to contactomorphism \cite{Etgu_Ozbagci11, Gi00, HKM09}.

In order to generalise the notion of a Morse structure from a closed contact manifold to one with convex boundary, it is helpful to discuss particular manifolds rather than isomorphism classes, so we make the following definitions.

\begin{definition}
A closed contact manifold $(M, \xi)$ is \textit{presented by the  open book} $(S,h)$  if it is contactomorphic to $M(S, h)$.  A  contact manifold with convex boundary $(M, \Gamma, \xi)$ is \textit{presented by the partial open book}  $(S,P,h)$ if it is contactomorphic to  $M(S,P, h)$.
\end{definition}

In the remainder of this paper we will consider manifolds of the form $M(S,h)$ or $M(S, P, h)$ so all results are up to diffeomorphism.  In the case that the initial object is a manifold with an honest ---as opposed to abstract--- open book, the identifying diffeomorphism may be used to transfer structures from $M(S,h)$ or $M(S, P, h)$ to the given contact manifold.

\section{Slices}\label{sec:slice}

Up to isotopy, the pair $(S,P)$ may be encoded via a simple combinatorial diagram generated by the handle structure, which we call a \emph{slice} and define presently.

The first step in defining a slice is to extend the core and co-core of each handle to a $1$-complex.
Consider a compact connected oriented surface $S$ constructed from a finite collection of $0$-handles by successively attaching $1$-handles $P_1, P_2, \ldots, P_r$. Since we only consider handle structures up to isotopy, we are free to assume that the attaching spheres are disjoint from the corners where two handles meet and from the endpoints of any co-core.  When a point $p$ of the attaching sphere lies on the boundary of a $0$-handle, extend the core of $P_i$ through $p$ via a ray to the centre of the $0$-handle.  Now  assume that the cores of previous handles have already been extended.  When $p$ lies on the boundary of a $1$-handle,  there is a unique (up to isotopy) way to extend the core of $P_i$ through $1$-handles until it reaches a point on the boundary of a $0$ handle and satisfies the condition that  co-core of $P_j$ intersects the core of $P_k$ in $\delta_{jk}$ points for all $j,k\leq i$.  Then one may extend radially, as above.   We call the union of the co-cores and the extended cores the \textit{core complex} associated to the handle structure.  Note that $S$ deformation retracts onto its core complex. If, at each stage, we allow attaching points
to slide along the boundary, by isotopy in the complement of the co-cores, this core complex  is still determined up to isotopy.

Now consider a pair $(S,P)$ with a compatible handle structure as in definition \ref{def:handle_structure}. Then $S$ can be constructed from $0$-handles $D_1, \ldots, D_d$ by first adding $1$-handles $R_1, \ldots, R_r$ to form $R$ and then adding further $1$-handles $P_1, \ldots, P_p$  to form $S$. That is,
\[
R = D_1 \cup \cdots \cup D_d \cup R_1 \cup \cdots \cup R_r, \quad
P = P_1 \cup \cdots \cup P_p, \quad
S = R \cup P.
\]
In the corresponding core complex, each core and co-core arises from an $R_i$ or $P_j$.

The boundary $\partial S$ consists of finitely many circles, each of which inherits a boundary orientation from $S$. These circles contain the endpoints of all co-cores, which form $r+p$ pairs of points. Each circle either lies in $\partial S \backslash \partial P$, or in $A$, or decomposes into arcs alternately in $A$ and $\partial S \backslash \partial P$. We represent the arcs of $\partial S \backslash \partial P$ by an additional decoration --- a marker denoted by an X.
\begin{definition}
Let $r,p,q \geq 0$ be integers. A \emph{slice} $\mathcal{SL}$ is a collection of oriented circles, together with a set of decorations at $2(r+p)+q$ distinct points as follows:
\begin{enumerate}
\item
$r$ pairs of points called \emph{antecedent pairs}
\item
$p$ pairs of points called \emph{primary pairs}
\item
$q$ further points called \emph{markers}.
\end{enumerate}
The \emph{slice} of a handle structure $R_1, \ldots, 
R_r$, $P_1, \ldots, P_p$ on $(S,P)$ consists of $\partial S$, together with antecedent  
pairs given by endpoints of co-cores of the $R_i$, primary  
pairs given by endpoints of co-cores of the $P_j$, and a marker in each arc of $\partial S \backslash \partial P$.

\end{definition}

Figure \ref{fig:slices} shows two examples of pairs $(S,P)$ with handle structures, together with their core complexes and slices.

The oriented circles and pairs of points (antecedent and primary taken together) of a slice are sufficient to recover $S$, up to homeomorphism. To recover the pair $(S,P)$, however, we need the distinction between antecedent and primary pairs as well as   the markers.

\begin{remark} Slices bear a resemblance to the \emph{arc diagrams} of bordered Floer theory \cite{LOT08}, especially in the \emph{bordered sutured} case of \cite{Zarev09} or in the context of the \emph{quadrangulated surfaces} studied by the second author in \cite{Me16_strand}. This is not surprising, since both are essentially boundary data of handle decompositions of a surface, though slices have slightly more decoration.
\end{remark}

\begin{figure}
\begin{center}
\scalebox{.7}{\includegraphics{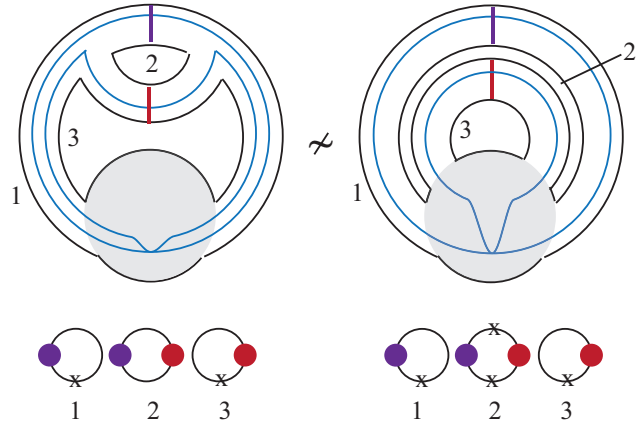}}
\caption{Two pairs $(S,P)$, together with core complexes and slices. In both figures, $P$ is white and $R$ is shaded. In both figures, $S$ is an annulus, and $R$ is a disc. However on the left $P$ is an annulus, while on the right $P$ consists of 2 discs. The two handle structures are related by an isotopy of attaching points which passes through arcs of $\partial S \backslash \partial P$, resulting in distinct slices.}
\label{fig:slices}
\end{center}
\end{figure}

\begin{lemma} 
If two pairs $(S,P)$, $(S',P')$ have isomorphic slices, then there is a homeomorphism of pairs $(S,P) \cong (S',P')$.
\end{lemma}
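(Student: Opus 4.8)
The plan is to promote a slice isomorphism to an orientation-preserving homeomorphism of the boundary circles and then to extend it across $S$, arranging that it carries $P$ to $P'$. An isomorphism of slices should be read as an orientation-preserving homeomorphism $\partial S \to \partial S'$ carrying antecedent pairs to antecedent pairs, primary pairs to primary pairs, and markers to markers. Since the antecedent and primary pairs are precisely the endpoints of the co-cores of the handles $R_i$ and $P_j$, such an isomorphism matches the co-core endpoints of $S$ with those of $S'$ and respects their types.

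First I would fix the co-cores $c_1, \ldots, c_{r+p}$ of $S$ and $c_1', \ldots, c_{r+p}'$ of $S'$ as properly embedded arcs, pair them type-by-type using the slice isomorphism (so that $c_i$ and its partner have endpoints matched by the boundary map), and choose on each $c_i$ a homeomorphism onto its partner agreeing with the boundary map on endpoints. Together with the circle homeomorphism this defines the desired map on the $1$-complex $\partial S \cup \bigcup_i c_i$. Cutting $S$ (resp. $S'$) along its co-cores undoes every $1$-handle and leaves the $0$-handles, a disjoint union of discs; this is exactly the operation described earlier of undoing a handle attachment by cutting through its co-core, and it underlies the remark, preceding the lemma, that the circles together with all the pairs already recover $S$ up to homeomorphism.

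The heart of the argument is that the boundary of each cut-open disc is read off from the slice alone. Traversing such a boundary, one alternately follows an arc of $\partial S$ lying between consecutive co-core endpoints and a copy of a co-core, and at each co-core foot the direction in which one turns is dictated by the orientation of $S$. Because the slice isomorphism preserves the cyclic order of marked points on each oriented circle together with the co-core pairing, it matches these boundary cycles, and hence induces a bijection of the cut-open discs and an orientation-preserving homeomorphism between their boundaries that already agrees with the maps fixed on $\partial S$ and on the co-cores. I expect this ribbon-graph style bookkeeping --- tracking the boundary cycles while keeping the turning directions consistent with the orientation --- to be the main obstacle, as it is where the combinatorial data of the slice must be converted into genuine topological control of the surface.

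Finally I would extend each boundary homeomorphism across the corresponding disc by the Alexander trick. Since these extensions already coincide with the fixed maps on the two copies of every co-core, they descend through the gluing to a single homeomorphism $S \to S'$ (smoothing corners if one works smoothly). By construction this homeomorphism carries each co-core to the co-core of the same type, so it sends the primary bands $P_j$ to the $P_j'$ and therefore $P = \bigcup_j P_j$ onto $P'$. The markers, placed in the arcs of $\partial S \setminus \partial P$, are matched by the slice isomorphism, so the partition of $\partial S$ into $A = \partial P \cap \partial S$ and its complement is respected; this is exactly the information, beyond the circles and pairs, that is needed to pin down $P$ as a subsurface, and it upgrades the homeomorphism of surfaces to the desired homeomorphism of pairs $(S,P) \cong (S',P')$.
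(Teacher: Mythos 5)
Your overall route coincides with the paper's: cut $S$ and $S'$ along all co-cores (the paper phrases this as surgery on the slice at each marked pair), identify the resulting pieces with the $0$-handles, and use the combinatorics of the slice to control the reassembly. The step you single out as the main obstacle is handled correctly: the oriented circles, the pairing of the marked points, and the surface orientation do determine the boundary cycles of the cut-open discs, which is exactly the paper's claim that surgery on the slice yields the boundary of the $0$-handles.

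The gap is your final assertion that, because the glued homeomorphism $g\colon S \to S'$ carries each co-core to the corresponding co-core, it ``sends the primary bands $P_j$ to the $P_j'$.'' This does not follow. Your $g$ is pinned only on the $1$-complex $\partial S \cup \bigcup_i c_i$ and is an arbitrary Alexander-trick extension on the interior of each cut-open disc; but a band $P_j$ is not determined by that $1$-complex, since its attaching arcs lie in the interiors of the cut-open discs, precisely where $g$ is uncontrolled (their endpoints, the corners of the handles, are not even decorations of the slice). Nor can you rescue the claim by shrinking $P$ onto its co-cores: $P$ need not be a regular neighbourhood of the union of the primary co-cores, because primary handles may be attached to other primary handles --- in the left-hand pair of Figure~\ref{fig:slices}, $P$ is an annulus, whereas a regular neighbourhood of the two co-cores is a pair of discs. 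So the map you build will in general have $g(P) \neq P'$; making $g$ respect the bands is the remaining content of the lemma, not a formal consequence of matching co-cores. The repair is to pin down more data before extending: match the attaching arcs as well, in the order of handle attachment (antecedent handles first, recovering $R$ and $R'$, then primary handles one at a time), and apply the Alexander trick piece by piece to the $0$-handles and half-handles. This staged extension is exactly what the paper's proof does when it reattaches handles successively and observes that at each stage, up to homeomorphism preserving $R$ and $P$, there is no choice of where to attach.
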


The proof explicitly reconstructs a surface pair from a slice.

\begin{proof} 

First consider the slice $\mathcal{SL}$ of a pair $(S,P)$. Surgery on $\mathcal{SL}$ at each pair of marked points (antecedent and primary) yields a 1-manifold which is the boundary of the surface formed by cutting  all the $1$-handles along their co-cores. This surgered surface is homeomorphic to the 0-handles, hence the number of components of the 1-manifold obtained by surgery on $\mathcal{SL}$ is equal to the number 
 of $0$-handles. In fact, the boundary of this surface naturally contains the markers, as well as the attaching spheres needed  to recover $R$ and $S$ in turn. We note that after reattaching the antecedent handles, the boundary contains primary pairs of points and markers, and each successive primary handle is attached at  points on the boundary of $R$ or on already-attached primary handles. 
Up to homeomorphism preserving $R$ and $P$ at each stage, there is no choice where to attach handles, so it follows that the slice determines the pair $(S,P)$.
\end{proof}

\begin{remark}The handle structures which appear in   \cite{Gay_Licata15} were required to have a  unique $0$-handle, but we note that this was a choice of convenience rather than necessity.  In particular, Lemma 4.5 --- the key technical lemma in the proof of the existence of Morse structures --- explicitly covers the case of multiple index $0$ critical points. 
\end{remark}

\section{Morse structures}

\subsection{Extendable monodromy}

For fixed $S$, there are many possible subsurfaces $P$ so that $(S,P)$ admits a compatible handle structure, and some such subsurfaces  will contain others. If $P \subseteq P'$ and the monodromies $h: P \rightarrow S$, $h' : P' \rightarrow S$ satisfy $h'|_P = h$, then we say $h'$ \emph{extends} $h$ or  that $h$ \emph{extends} to $P'$.

\begin{lemma}
If $h' : P' \rightarrow S$ extends $h : P \rightarrow S$, then there is a contact embedding of $M(S,P,h)$ into $M(S,P',h')$.
\end{lemma}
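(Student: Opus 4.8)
The plan is to build the embedding directly from the handlebody descriptions, exploiting that the handlebody $H$ depends only on $S$ and is therefore literally the same piece in $M(S,P,h)$ and $M(S,P',h')$, while the handlebody $N$ for $P$ sits inside the handlebody $N'$ for $P'$. First I would record the two compatibility facts that make this work. Writing $A = \partial P \cap \partial S$ and $A' = \partial P' \cap \partial S$, note that any point of $\partial S$ lying in $P'$ must lie in $\partial P'$ (a boundary point of $S$ has only a half-disc neighbourhood, so it cannot be interior to a subsurface). From $P \subseteq P'$ this gives $A \subseteq A'$, and intersecting with $P$ gives the equality $A' \cap P = A$. Consequently the collapsing relations defining $N = (P \times [0,1])/{\sim}$ and $N' = (P' \times [0,1])/{\sim}$ agree on $P \times [0,1]$: a segment $\{x\} \times [0,1]$ is collapsed in $N$ exactly when $x \in A$, i.e.\ exactly when it is collapsed in $N'$. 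Hence the inclusion $P \times [0,1] \hookrightarrow P' \times [0,1]$ descends to an embedding $N \hookrightarrow N'$.

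Next I would assemble the global map. The gluing data for $M(S,P,h)$ identifies $P \times \{0\}$ with $P \times \{0\} \subseteq S \times \{0\} \subseteq \partial H$ by the identity and identifies $(x,1)$ with $(h(x),-1)$; the gluing data for $M(S,P',h')$ does the same with $P',h'$ in place of $P,h$. Since $h'|_P = h$, the restriction of the $M(S,P',h')$ gluing to the subset $N \subseteq N'$ coincides with the $M(S,P,h)$ gluing. Therefore the identity on $H$ and the inclusion $N \hookrightarrow N'$ fit together to give a well-defined continuous map $\Psi \colon M(S,P,h) \to M(S,P',h')$; it is injective and, being a continuous injection of a compact space into a Hausdorff one, a topological embedding onto a codimension-$0$ submanifold.

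It remains to promote $\Psi$ to a contact embedding, and here I would argue by the uniqueness half of the relative Giroux correspondence rather than by writing down forms. Equip $M(S,P',h')$ with its contact structure $\xi'$ and pull back along $\Psi$. On $H$ the map is the identity, so $\Psi^*\xi'|_H$ is exactly the tight structure with dividing set $\partial S \times \{-1/2\}$ used to build $M(S,P,h)$. On $N$, since $\Psi(N) \subseteq N'$ is a codimension-$0$ submanifold of the tight manifold $(N',\xi'|_{N'})$, the restriction $\Psi^*\xi'|_N$ is again tight (an overtwisted disc in $N$ would persist in $N'$). Thus $\Psi^*\xi'$ is tight on each of the two pieces $H$ and $N$, and provided its dividing sets on $\partial H$ and $\partial N$ are the prescribed curves $\partial S \times \{-1/2\}$ and $\partial P \times \{1/2\}$, the uniqueness of the contact structure presented by $(S,P,h)$ forces $\Psi^*\xi'$ to be isotopic to $\xi$; composing $\Psi$ with this isotopy yields the desired contact embedding.

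The hard part will be the dividing-set bookkeeping in this last step: one must check that the convex boundary structure of $M(S,P,h)$ is genuinely realised by $\xi'$ along $\Psi(\partial M(S,P,h))$ inside $M(S,P',h')$. The subtlety is that the handles of $P' \setminus P$ are carved out of $R$, so part of $\partial M(S,P,h)$ --- namely the arcs of $(\partial P \setminus \partial S) \times [0,1]$ adjacent to the new handles --- becomes interior to $M(S,P',h')$, and one must verify that $\Psi(\partial M(S,P,h))$ is isotopic to a convex surface whose dividing set is $\Gamma$. I expect this to follow from the convex product normal form of $\xi'$ near $N'$ together with an edge-rounding computation showing that the sutures $\overline{\partial P' \setminus \partial S} \times \{1/2\}$ restrict to $\overline{\partial P \setminus \partial S} \times \{1/2\}$, but it is the step that requires genuine care.
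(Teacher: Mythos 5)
Your topological construction of $\Psi$ is correct and follows exactly the paper's route: the paper's own (three-sentence) proof is precisely the observation that $H=H'$, that $N$ embeds in $N'$, and that the gluings respect these identifications. The gap is in the promotion of $\Psi$ to a \emph{contact} embedding, and it is a genuine one, which you yourself flag but do not close. Your plan is to pull back $\xi'$ and invoke the uniqueness of the contact structure presented by $(S,P,h)$; but that uniqueness statement has two hypotheses on the piece $N$ --- tightness of the restriction \emph{and} the condition that $\partial N$ is convex with dividing set $\partial P \times \{1/2\}$ --- and you establish only the first. Tightness of $\Psi^*\xi'|_N$ does follow, as you say, because a codimension-$0$ submanifold of a tight manifold is tight. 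But the defining data of $\xi'$ gives convexity and dividing-set information only on $\partial H'$ and $\partial N'$, whereas the portion $(\partial P \setminus \partial S)\times[0,1]$ of $\partial N$ adjacent to the handles of $P'\setminus P$ lies in the \emph{interior} of $N'$, where nothing yet guarantees $\partial N$ is even convex for $\xi'$, let alone that its dividing set is the prescribed one. Without this, Torisu-type uniqueness cannot be applied, so what you have proved is a smooth embedding together with tightness of the pulled-back structure on each handlebody --- not that $\Psi^*\xi'$ is isotopic to $\xi$. Your final paragraph defers exactly this verification (``the step that requires genuine care''), so the argument is incomplete at the one point where the contact geometry actually enters.

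The way to close the gap --- and, implicitly, what the paper's phrase ``the construction of $N,N'$ shows that $N$ contact embeds in $N'$'' is doing --- is to work with explicit nested models rather than pulling back and appealing to uniqueness a posteriori. The tight structure on $N'$ may be taken in its standard vertically invariant product form, $\ker\bigl(dt+\beta'\bigr)$ for a suitable primitive $\beta'$ on $P'$, and since $P\subseteq P'$ is compatible with the handle structures one may choose $\beta'$ whose Liouville field exits transversally through $\partial P$ as well as through $\partial P'$. With such a choice the restriction of the model on $N'$ to $P\times[0,1]$ \emph{is} the model tight structure on $N$, with dividing set $\partial P\times\{1/2\}$ built in rather than verified afterwards by edge rounding; the same compatibility on $H=H'$ is trivial. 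Then your map $\Psi$ is a contact embedding on the nose, the concluding isotopy in your argument is unnecessary, and your careful bookkeeping ($A'\cap P=A$, agreement of the collapsing relations, agreement of the gluings via $h'|_P=h$) supplies the details that the paper leaves implicit.
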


\begin{proof}
Consider the construction of the contact manifolds via handlebodies $H,N$ and $H',N'$, respectively. The construction of $H$ is independent of $h$ and $P$, so $H,H'$ are contactomorphic. The construction of $N,N'$ shows that $N$ contact embeds in $N'$. Now the gluing of $H$ and $N$ into $M(S,P,h)$, and the gluing of $H'$ and $N'$ into $M(S,P',h')$, respect this contact embedding.
\end{proof}

\begin{definition} 
A monodromy map $h: P \rightarrow S$ is \emph{extendable} if it extends to $S$, i.e., if there exists a homeomorphism $\tilde{h} : S \rightarrow S$ such that $\tilde{h}|_P = h$.
\end{definition}

Thus, when $h$ is extendable, $M(S,P,h)$ contact embeds into $M(S,S,\tilde{h}) = M(S,\tilde{h})$, a closed manifold. This fact  will allow us to use the results of \cite{Gay_Licata15} in the context of partial open books.

In general, a monodromy map for a partial open book is not extendable. For instance, if $h$ is extendable then $S \setminus P \cong S \setminus h(P)$, a condition which often fails; see, for example, Example~\ref{eg:wtf}. However,  certain conditions guarantee that $h$ is  extendable.

\begin{proposition} If $S\setminus P$ and $S\setminus h(P)$ are both connected, then $h$ extends to a homeomorphism of $S$.
\end{proposition}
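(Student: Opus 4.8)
The plan is to build the extension $\tilde h$ by extending $h$ across the complementary region. Write $R=\overline{S\setminus P}$ and $R'=\overline{S\setminus h(P)}$, and let $\beta=\overline{\partial P\setminus\partial S}$ denote the (interior) attaching arcs, so that $S=P\cup_\beta R$ and also $S=h(P)\cup_{h(\beta)}R'$. Any homeomorphism $\tilde h\colon S\to S$ with $\tilde h|_P=h$ must carry $R$ onto $R'$, so it suffices to produce a homeomorphism $g\colon R\to R'$ that agrees with $h$ along $\beta$. Gluing $h$ on $P$ to $g$ on $R$ along $\beta$ then yields $\tilde h$, and the agreement along $\beta$ makes the result a genuine homeomorphism.

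First I would pin down the boundaries. Because $h$ is the identity on $A=\partial P\cap\partial S$ and carries the interior arcs $\beta$ to interior arcs $h(\beta)$ with the same endpoints (the endpoints of the arcs of $A$, which $h$ fixes), the two regions share the same free boundary $\Phi:=\overline{\partial S\setminus A}\subset\partial S$. Thus $\partial R=\Phi\cup\beta$ and $\partial R'=\Phi\cup h(\beta)$, and the rule ``identity on $\Phi$ and $h$ on $\beta$'' is a well-defined homeomorphism $\partial R\to\partial R'$: the two pieces agree at the shared corner points, and since $R$ and $R'$ both inherit their orientation from $S$, the map is orientation-preserving on the boundary. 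In particular $R$ and $R'$ have the same number of boundary components, a fact that, notably, does \emph{not} require the connectivity hypothesis.

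Next I would invoke the classification of surfaces. The balanced condition recorded earlier, $\chi(R_+)=\chi(R_-)$ with $R_+\cong R$ and $R_-\cong R'$, gives $\chi(R)=\chi(R')$; the hypothesis supplies that both $R$ and $R'$ are connected; and the previous paragraph gives equality of boundary-component counts. Connected compact oriented surfaces with the same Euler characteristic and the same number of boundary components are homeomorphic, so $R\cong R'$. The connectivity hypothesis enters precisely here: without it, $R$ and $R'$ could share Euler characteristic and total boundary count yet distribute them over inequivalent collections of components, which is exactly the failure flagged in Example~\ref{eg:wtf}.

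The principal obstacle is to upgrade this abstract homeomorphism to one realizing the prescribed boundary behaviour. I would extend the boundary homeomorphism $\partial R\to\partial R'$ built above to a homeomorphism $g\colon R\to R'$ of the whole surfaces, which requires the standard input that between homeomorphic connected compact oriented surfaces every orientation-preserving homeomorphism of the boundaries extends over the surface (the mapping class group of a connected surface realizes every permutation of boundary components together with boundary rotations, so there is no obstruction). Granting this, $g|_\beta=h|_\beta$ by construction, and gluing $h$ and $g$ along $\beta$ produces the desired $\tilde h$ with $\tilde h|_P=h$. I expect this extension step, together with the routine verification that the glued map is a homeomorphism, to be the main technical point; by contrast the matchings of Euler characteristic and of boundary-component count are essentially forced by the handle structure and the identity-on-$A$ condition.
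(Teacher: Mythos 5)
Your proposal is correct and follows essentially the same route as the paper's proof: match the boundary data of $R=\overline{S\setminus P}$ and $R'=\overline{S\setminus h(P)}$ (identity on $\partial S\setminus\partial P$, $h$ on the interior arcs), conclude $R\cong R'$ from the classification of connected compact oriented surfaces via equal Euler characteristic and boundary-component count, and then realize the prescribed boundary homeomorphism by a homeomorphism $R\to R'$ (the paper phrases this as braiding boundary components, you as realizing permutations and rotations by the mapping class group) before gluing to $h$. Your write-up is somewhat more explicit about the gluing and the well-definedness along $\beta$, but the argument is the same.
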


\begin{proof} Boundary components of $S\setminus P$ and $S\setminus h(P)$ are in bijective correspondence, as $\partial S \setminus \partial P$ is preserved  and arcs of $\partial P \cap \text{Int } S$ map to arcs connnecting the same pairs of points on $\partial S \setminus \partial P=\partial S \setminus \partial h(P)$. Since the Euler characteristic and number of boundary components of these surfaces agree, they are homeomorphic.  A homeomorphism between connected surfaces may be chosen to induce any permutation of the boundary components; this is easily seen by viewing the boundary components as marked points on a closed surface and braiding them.  Thus the map fixing points of $\partial S \setminus \partial P$ may be extended to a homeomorphism of $S$ which sends $P$ to $h(P)$, as desired.
\end{proof}

Figure~\ref{fig:eoex} provides an example of an extendable monodromy.

\subsection{Morse diagrams for extendable partial open books}\label{sec:diagram}

Section~\ref{sec:slice} introduced a slice as a combinatorial encoding of the pair $(S,P)$.   In order to completely encode a partial open book via slices, it remains to encode the map $h: P\rightarrow S$.

We begin by building up Morse functions on $S \times [-1,1]$.

\begin{definition}\label{prop:exist1} 
Given a homeomorphism $\widetilde{h}:S\rightarrow S$ which restricts to the identity on $\partial S$, a  smooth function $F:S \times [-1,1] \rightarrow (-\infty, 0]$  is a \textit{Morse structure function} for $\widetilde{h}$ if  the following properties are satisfied:
\begin{itemize}
\item $F^{-1} (0)= \partial S\times [-1,1]$; 
\item for all values of $t \in [-1,1]$, on the interior of the page $S \times \{t\}$, $F$ restricts to a Morse function $f_t$ with finitely many index $0$ critical points and no index $2$ critical points;
\item $f_t$ is Morse-Smale except  at isolated $t$ values, called handleslide $t$-values; 
\item $f_{-1} \circ h = f_{1}$, where we regard $h$ as a function $S \times \{1\} \rightarrow S \times \{-1\}$
\end{itemize}
\end{definition}

A Morse structure function $F : S \times [-1,1] \rightarrow (-\infty, 0]$ descends to $M(S, \tilde{h})$ and then restricts to a function $M(S,P,h) \rightarrow (-\infty,0]$, also denoted $F$. We call a function of this form a \textit{Morse structure function} for the partial open book.

\begin{definition} A \textit{Morse structure} on $M(S,P,h)$ is a Morse structure function $F$ together with a vector field $V$ such that the following conditions are satisfied:
\begin{enumerate}
\item the handle structures induced by $f_t$ are isotopic for all $t\in [0,1]$;
\item $V$ is tangent to each page;
\item  the restriction of $V$ to the page $X\times \{t\}$ is gradient-like for $f_t$
\item near each  component of the binding, there is a neighbourhood parameterised by $(\rho, \mu, \lambda)$ such that $B=\{\rho=0\}$, $\mu=t$, $F=-\rho^2$, and $V=-(\frac{\rho}{2})\partial_\rho$.  
\end{enumerate}
\end{definition}
Strictly speaking, $f_0$ and $f_1$ are defined on $S \times \{0,1\}$, while $f_t$ is defined only on $P \times \{t\}$ for $t \in (0,1)$. Condition 1 above refers to $f_t|_P$ for $t \in \{0,1\}$.

\begin{proposition}\label{prop:exist} Every partial open book with extendable monodromy admits a Morse structure.
\end{proposition}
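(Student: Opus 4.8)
The plan is to build the required Morse structure by restricting one on the associated closed open book. Since $h$ is extendable, fix a homeomorphism $\tilde h : S \to S$ with $\tilde h|_P = h$. By the lemma on extensions, this yields a contact embedding $M(S,P,h) \hookrightarrow M(S,\tilde h)$, under which $M(S,P,h)$ is identified with $(S \times [-1,0]) \cup (P \times [0,1])$ inside $S \times [-1,1]$ modulo the usual identifications; in particular its pages are the full surfaces $S \times \{t\}$ for $t \in [-1,0]$ and the subsurfaces $P \times \{t\}$ for $t \in (0,1)$. Because every closed open book carries a Morse structure by \cite{Gay_Licata15}, and a Morse structure function for the partial open book is by definition the restriction of one for $\tilde h$, it suffices to produce a Morse structure $(F,V)$ on $M(S,\tilde h)$ whose restriction meets conditions (1)--(4).

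First I would make the closed structure compatible with the given pair. Fix a handle structure compatible with $(S,P)$ as in Definition~\ref{def:handle_structure}; its $0$-handles and $1$-handles $R_1,\dots,R_r,P_1,\dots,P_p$ are realized by the index-$0$ and index-$1$ critical points of a Morse function $f$ on $S$. I would then arrange $F : S \times [-1,1] \to (-\infty,0]$ so that $f_t = f$ for every $t \in [0,1]$, i.e.\ $F$ is a genuine product with no handleslides over the region $t\in[0,1]$, while for $t \in [-1,0]$ the handleslides realizing $\tilde h$ are inserted so that $f_{-1}\circ\tilde h = f_1 = f$. The construction of \cite{Gay_Licata15} shows that any monodromy can be realized in this way by finitely many handleslides at isolated $t$-values, together with a gradient-like Liouville vector field $V$ and the prescribed normal form near the binding; since the handleslide values are isolated, confining them to $[-1,0]$ costs nothing beyond reparameterizing the interval.

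Finally I would restrict $(F,V)$ and check the four conditions. Conditions (2), (3) and (4) are local: each page $P\times\{t\}$ is a full-dimensional subsurface of $S\times\{t\}$, so $V$ remains tangent to the pages and gradient-like for $f_t|_P$, and the binding of $M(S,P,h)$ inherits its standard coordinate neighbourhood $(\rho,\mu,\lambda)$ from that of $M(S,\tilde h)$. Condition (1) is exactly what the product construction over $[0,1]$ was designed to give: with no handleslides for $t\in[0,1]$, the handle structure induced by $f_t|_P$ is literally constant, hence isotopic, across $[0,1]$, and equals the fixed compatible handle structure on $(S,P)$.

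The hard part will be condition (1), namely guaranteeing that the monodromy $\tilde h$ can be realized entirely by handleslides in the $H$-region $t\in[-1,0]$ so that the $N$-region stays product-like, and that the Morse function $f$ induces an honest compatible handle structure on $(S,P)$ (using, as in the remark following the slice lemma, that the construction of \cite{Gay_Licata15} tolerates multiple index-$0$ critical points). Once this localisation is in place, everything else transfers automatically from the closed case through the contact embedding.
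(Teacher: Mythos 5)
Your proposal is correct and follows essentially the same route as the paper: extend $h$ to $\tilde h$, invoke the existence result (Proposition 3.3) of \cite{Gay_Licata15} for the closed open book $M(S,\tilde h)$, use the flexibility in that construction to confine all handleslides to $t \in [-1,0]$ so that no handleslides occur for $t \in (0,1)$, and then restrict the resulting $(F,V)$ to $M(S,P,h)$. The paper states this in two sentences; your write-up simply makes explicit the restriction step and the verification of conditions (1)--(4), which the paper leaves implicit.
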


\begin{proof}
This is immediate from Proposition 3.3 of \cite{Gay_Licata15}; this is a result about a (non-partial) monodromy map for a standard (non-partial) open book.  It is implicit in the proof there that handleslides can happen at chosen values of $t$; we choose them not to happen for $t \in (0,1)$.
\end{proof}

A Morse structure induces a handle structure on $S \times \{t\}$. In particular, on each page the flowlines between index $0$ and index $1$ critical points, together with the flowlines from the index $1$ critical points to $\partial S \times \{t\}$, form a core complex on  $S \times \{t\}$. Thus $\tilde{h}$ yields a slice $\mathcal{SL}_t$ on $S$ for each value of $t$.

\begin{lemma}\label{lem:det}
The slices on $S \times  \{-1\} $ and $S \times\{0\}$ determine the mapping class of $\widetilde{h}$.  

\end{lemma}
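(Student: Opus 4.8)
The plan is to read the two slices as systems of co-core arcs on the fixed surface $S$ and then to recover the mapping class of $\tilde{h}$ from its action on these arcs via the Alexander method.

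First I would unpack the last bullet in the definition of a Morse structure function, namely $f_{-1}\circ h = f_1$. Extending $h$ to $\tilde{h}$, this reads $f_1 = f_{-1}\circ \tilde{h}$ on all of $S$, so $\tilde{h}$ carries the critical points and gradient flowlines of $f_1$ to those of $f_{-1}$. Hence the core complex $C_1$ built from $f_1$ satisfies $\tilde{h}(C_1) = C_{-1}$, where $C_{-1}$ is the core complex of $f_{-1}$; in particular $\tilde{h}$ maps the co-core arcs recorded by $\mathcal{SL}_1$ onto those recorded by $\mathcal{SL}_{-1}$, while fixing $\partial S$ pointwise. Condition 1 in the definition of a Morse structure, together with the choice made in the proof of Proposition~\ref{prop:exist} that no handleslides occur for $t\in(0,1)$, guarantees that the handle structures induced by $f_0$ and $f_1$ are isotopic rel $\partial S$, so $\mathcal{SL}_0$ and $\mathcal{SL}_1$ record the same arc system. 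Thus $\mathcal{SL}_{-1}$ and $\mathcal{SL}_0$ furnish two collections of properly embedded co-core arcs in $S$, with the property that $\tilde{h}$ sends the second collection onto the first.

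Next I would observe that each of these collections is a \emph{cutting system}: the arcs are precisely the co-cores of the $1$-handles, so cutting $S$ along them recovers the $0$-handles, which are discs. Here I would appeal to the reconstruction argument used in the proof of the earlier lemma (that a slice determines its pair $(S,P)$) to justify that a slice determines its core complex not merely as an abstract object but as a collection of arcs in $S$, well defined up to proper isotopy rel $\partial S$. Since $\tilde{h}$ and the monodromy are all taken relative to the fixed boundary $\partial S$, both $C_{-1}$ and $C_0$ are genuinely arc systems in one and the same surface $S$, and may therefore be compared directly.

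Finally I would invoke the Alexander method: a homeomorphism of a compact surface that fixes $\partial S$ pointwise is determined, up to isotopy rel $\partial S$, by the proper isotopy classes of the images of a system of disjoint arcs cutting the surface into discs. Applying this to the cutting system of co-cores of $C_0$, whose $\tilde{h}$-images are the co-cores of $C_{-1}$, pins down the isotopy class --- hence the mapping class --- of $\tilde{h}$. I expect the main obstacle to be bookkeeping rather than geometry: one must match each co-core of $C_0$ with the co-core of $C_{-1}$ that is its $\tilde{h}$-image, reading off this correspondence from the primary/antecedent labels and the pairing data of the slices, and track orientations, so that the arc-image data fed into the Alexander method is unambiguous. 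Verifying that the recovered arcs are identified correctly rel $\partial S$ --- not merely setwise --- is the crux of the argument.
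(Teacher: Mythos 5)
Your proof is correct and takes essentially the same approach as the paper: the paper's proof also rests on the Alexander method (its citation, Proposition 2.8 of Farb--Margalit, is precisely that result), applied to the core complexes that the two slices determine up to isotopy, and it likewise uses the absence of handleslides for $t\in(0,1)$ to reduce the gluing at $t=\pm 1$ to a comparison of the slices at $t=-1$ and $t=0$. Your write-up merely makes explicit the steps the paper leaves implicit, namely unpacking $f_1=f_{-1}\circ\tilde{h}$, noting that the co-cores form a cutting system, and flagging the arc-matching bookkeeping.
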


\begin{proof} According to Proposition 2.8 in \cite{Farb_Margalit_MCG}, there is a unique mapping class which renders the core complex of $S\times \{-1\}$  isotopic to that of $S\times \{0\}$.  The lemma then follows from the observation that a slice determines these decorations up to isotopy. As the handle structures are isotopic for $t \in (0,1)$, it is sufficient to look at $t$ from $-1$ to $0$.
\end{proof}


We now consider the slices derived from the \emph{partial} monodromy $h$, taking a Morse structure $(F,V)$ as above. We restrict the slices from $\tilde{h}$ on $S \times [-1,1]$ to $S \times [-1,0] \cup P \times [0,1]$. As $P$ is a collection of handles added to $R$, for each $t \in [0,1]$ we obtain a ``slice" on $P \times \{t\}$, again denoted $\mathcal{SL}_t$, consisting of the oriented arcs and circles of $A = \partial P \cap \partial S$, together with pairs of points from co-cores of primary handles. (There are now no antecedent pairs, nor markers, since these arise from $R$, rather than $P$.)  

Let us now consider all the slices simultaneously. For each $t \in [-1,0]$, we have a slice $\mathcal{SL}_t$ consisting of the oriented $\partial S$ with pairs of antecedent points, primary points, and markers. For each $t \in [0,1]$, we have a slice $\mathcal{SL}_t$ consisting of $A \subseteq \partial S$ with pairs of primary points only.  For any value of $t$, the associated slice embeds as a collection of curves in the corresponding page, and we may assemble these into a  surface embedded in $M(S, P, h)$.

\begin{definition} 
Given an extendable partial open book $(S,P,h)$ and a Morse structure  $(F,V)$, the associated \emph{Morse diagram} is the surface formed from the union of slices
\[
\bigcup_{t \in [-1,1]} \; \mathcal{SL}_t \times \{t\}.
\]
\end{definition}

Thus, the Morse diagram consists of
\[
\partial S \times [-1,0] \cup A \times [0,1]
\]
with the identification $(x,1) \sim (x,-1)$ for all $x \in A$, together with some decorations. (Note the gluing is straightforward since the restriction of $\tilde{h}$ to $\partial S$ is the identity.) The decorations consist of curves, assembled from the points on each slice.
Thus if a slice with $t \in [-1,0]$ has $r$ antecedent pairs, $p$ primary pairs, and $q$ markers, the the Morse diagram contains $r$ pairs of \emph{antecedent curves}, $p$ pairs of \emph{primary curves}, and $q$ \emph{marker curves}. However, the marker curves need not be drawn, as their location is seen automatically seen: markers correspond to arcs of $\partial S \setminus \partial P$, which arise as segments of the boundary of the Morse diagram.  Note that these curves cannot be assumed to be either connected or disjoint from each other; a handle slide of one co-core over another leads creates a \textit{teleport} of the curve associated to the sliding co-core over the curve associated to the stationary co-core; a handleslide on the page $S\times \{t_0\}$ corresponds to a pair of trivalent vertices on the Morse diagram at height $t_0$.  See Figure \ref{fig:exex}.

\begin{figure}
\begin{center}
\includegraphics[scale=0.6]{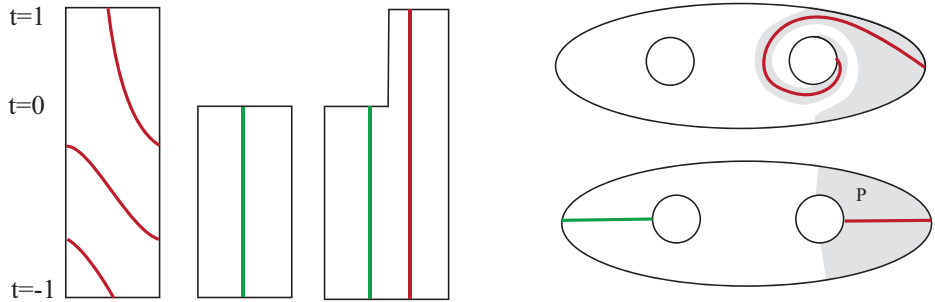}
\caption{Left: A Morse diagram for a partial open book. Right: The monodromy $h$ is defined by its effect on $P$ shown Note that $h$ is extendable to $\widetilde{h}$ which is a single left handed Dehn twist.}
\label{fig:exex}
\end{center}
\end{figure}

Lemma~\ref{lem:det} and the discussion above establish the following result:
\begin{proposition} A Morse diagram determines a partial open book $(S, P, h)$ up to isotopy of the pair $(S,P)$ and the mapping class of an extension $\widetilde{h}\subset \text{MCG}(S)$.
\end{proposition}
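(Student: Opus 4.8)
The plan is to show that the Morse diagram carries exactly the information needed to reconstruct $(S,P)$ and the mapping class of $\widetilde{h}$, invoking the earlier results. First I would recover the pair $(S,P)$. The Morse diagram contains, for each $t\in[-1,0]$, a slice $\mathcal{SL}_t$ on $\partial S$ with its antecedent pairs, primary pairs, and markers (the latter visible as the boundary segments lying in $\partial S\setminus\partial P$). Restricting attention to a single such slice, say $\mathcal{SL}_{-1}$, the earlier Lemma on slices guarantees that this slice determines the pair $(S,P)$ up to homeomorphism of pairs. So the topological pair $(S,P)$ is read off from the Morse diagram up to isotopy, as claimed.

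Next I would recover the mapping class of the extension $\widetilde{h}$. By construction the Morse diagram records the slices $\mathcal{SL}_t$ for all $t$, in particular $\mathcal{SL}_{-1}$ (equivalently $\mathcal{SL}_1$ after the identification $(x,1)\sim(x,-1)$) and $\mathcal{SL}_0$. The compatibility condition $f_{-1}\circ h = f_1$ in the definition of a Morse structure function, together with the fact that $V$ is gradient-like on each page, means that the core complexes on the pages $S\times\{-1\}$ and $S\times\{0\}$ are precisely the data appearing in Lemma~\ref{lem:det}. That lemma asserts that these two slices determine the mapping class of $\widetilde{h}$, using the uniqueness statement (Proposition 2.8 of \cite{Farb_Margalit_MCG}) for the mapping class carrying one core complex to the other. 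Thus the mapping class $\widetilde{h}\in\text{MCG}(S)$ is determined.

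Finally I would address the passage from $\widetilde{h}$ to the partial monodromy $h$. Since $h=\widetilde{h}|_P$ and $P\subseteq S$ has already been recovered as part of the pair $(S,P)$, the partial monodromy $h$ is obtained by restriction. The statement only claims determination up to the mapping class of \emph{an} extension $\widetilde{h}$, so I need not worry that distinct extensions of the same $h$ yield the same diagram; the proposition is deliberately phrased to record only the data the diagram sees. Combining the recovery of $(S,P)$ with the recovery of the mapping class of $\widetilde{h}$ gives the partial open book $(S,P,h)$ up to the stated equivalences.

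The main obstacle I anticipate is the bookkeeping at $t=0$, where the structure transitions from full slices (on $\partial S$, for $t\in[-1,0]$) to partial slices (on $A$, for $t\in[0,1]$), and ensuring that the restriction of $\widetilde{h}$ to $P$ is exactly what the partial slices encode. One must check that the primary curves of the partial slices on $A\times[0,1]$ genuinely record $h(P)\subseteq S$ and not merely $\widetilde{h}(P)$, and that the markers and antecedent data from the $t\in[-1,0]$ portion suffice to pin down $R$ so that $P=\overline{S\setminus R}$ is unambiguous. Once this consistency across $t=0$ is verified, the proposition follows directly from the slice Lemma and Lemma~\ref{lem:det} as indicated in the text preceding the statement.
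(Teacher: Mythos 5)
Your proposal is correct and takes essentially the same route as the paper, whose proof is precisely the combination you give: the slice lemma recovers the pair $(S,P)$ up to homeomorphism, Lemma~\ref{lem:det} recovers the mapping class of $\widetilde{h}$ from the slices at $t=-1$ and $t=0$, and $h$ is then the restriction $\widetilde{h}|_P$. One minor slip worth fixing: the slice $\mathcal{SL}_{-1}$ carries the handle data of $h(P)$ rather than of $P$, so it reconstructs the pair $(S,h(P))$; it is cleaner to read $(S,P)$ off from $\mathcal{SL}_0$, although since $\widetilde{h}$ is a homeomorphism of $S$ taking $P$ to $h(P)$, this does not affect the conclusion up to the stated equivalences.
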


\begin{remark} The Morse diagram of a partial open book  will clearly depend on the choice of extension $\widetilde{h}$, but this mirrors the closed case which also makes no claims of uniqueness.
\end{remark}

\section{Front projections of Legendrian tangles}\label{sec:contacto}
If the only goal is constructing a Morse diagram, there is a great deal of flexibility in the choice of $V$. However, strengthening the conditions on $V$ allows us to prove Theorem~\ref{thm:intro} and promotes the Morse diagram to a tool for studying Legendrian tangles in $M(S,P,h)$.  

\begin{proof}[Proof of Theorem~\ref{thm:intro}]

The main result of \cite{Gay_Licata15} is that for each component of the binding $B$ of $M(S,\widetilde{h})$, the preimage of the flow of $V$ is contactomorphic to $(0,\infty) \times S^1 \times S^1$ with coordinates $x \in (0,\infty)$, $y, z \in S^1$ and with contact structure $\xi_W = \ker (dz + x\; dy)$.  

We briefly summarise the idea of the proof and refer the reader to \cite{Gay_Licata15} for details.  The key technical ingredient is a proof that there exists a contact form $\alpha$ and a Morse structure $(F, V)$ with the additional property that $V$ is Liouville for $d(\alpha|_{\text{Int} S\times \{t\}})$.  By choosing $\alpha$ to have a specified form near the binding, we may define an explicit map which sends $(\rho, \mu, \lambda)$ to $\big( \frac{1}{\rho^2},\lambda, \mu)$, where the latter represent $(x,y,z)$ coordinates on $W$.  This map identifies $V$ near the binding with the vector field $x\partial_x$ on $W$  and this identification extends the map to the rest of $M\setminus (\text{Skel}\cup B)$.  
 
 Given this, we consider any extension $\widetilde{h}$ for $h$ and prove Theorem~\ref{thm:intro} by considering the contact submanifold  $M(S,P,h)$ inside $M(S,\widetilde{h})$.  In the case of closed components of the binding of $M(S,P,h)$, the corresponding component of $M\setminus (\text{Skel}\cup B)$ is contactomorphic to $W$ itself, just as in the case of a closed contact manifold.  
 
 For binding components coming from $A$, we begin with a copy of $W$ and remove points which lie in $M(S,\widetilde{h})$ but not $M(S,P,h)$.  The contactomorphism described above takes pages of the open book to planes corresponding to fixed $z$ value.  For simplicity, then, we may assume that $z$ takes values in the circle formed by identifying the endpoints of $[-1,1]$.  For each $z\in [-1,0]$, and annulus $(0,\infty)\times S^1$ is left untouched.  On the other hand, for $z\in (0,1)$, the  circle parameterised by $y$ is identified with a boundary component of $S$; thus when we restrict to the partial open book, we remove $(0,\infty)\times I$ for the image of each interval $I$ in $\partial S\setminus A$.  In the language of flows, we remove the image of any flowline of $V$ which terminates on a point of $\partial S\setminus A$, deleting $|\partial S\setminus A|$ rectangles $J\times (0,1)$ from the Morse diagram. Finally, we note that the complete flowline  from a point on $A$ (away from the co-cores) terminates at an index $0$ critical point.  Since $R$ contains an open neighborhood of each index $0$ critical point, the flowline exits $P$ after some finite amount of time.  Thus for each $y$-interval $K$ which remains, we also remove an open set $\big (0, g(y,z)\big) \times K \times (0,1)$ from $W$; here $g$ is a continuous function.

\end{proof}
Having established (via appeal to the closed case) that one may always find a Morse structure which is compatible with the contact structure as described in the proof of Theorem~\ref{thm:intro}, we henceforth assume all Morse structures are of this form.  Suppose now that $\Lambda$ is a Legendrian curve in $M(S,P, h)$ which is disjoint from the binding and meets the core complex $\mathcal{C}$ transversely.  Viewing the Morse diagram as a properly embedded subsurface of the manifold, we may flow $\Lambda\setminus (\Lambda \cap \mathcal{C})$ by $\pm V$ to the Morse diagram to get a \textit{front}  $\mathcal{F}(\Lambda)$ which is sufficient to recover the original curve.  

\begin{proof}[Proof of theorem \ref{thm:Legendrian_tangles}]
In order to see that the front projection of a Legendrian tangle determines the tangle itself, it is useful to note that $W$ is a quotient of the $x>0$ half-space  of $(\mathbb{R}^3, \xi_{\text{std}})$.  Front projection for Legendrian knots is classically defined in $\mathbb{R}^3$, with the key characteristic that the slope of the tangent in the projection recovers the $x$ coordinate of the Legendrian curve.  Alternatively, one may take the perspective that front projection to the $x=c$ plane in $\mathbb{R}^3$ is the image under the flow of the vector field $x\partial_x$; this vector field is Liouville for the area form induced by $\alpha=dz+x \ dy$ on each plane $z=c$.  The contactomorphism described above takes the Liouville vector field on each page $x\partial_x$ and identifies the image of an $x=c$ plane with the Morse diagram.  The property that a classical front completely determines a Legendrian curve then implies the analogous statement in the context of open books.

\end{proof}

The relationship between fronts in open books and fronts in $\mathbb{R}^3$ yields the familiar properties:
\begin{enumerate}
\item $\mathcal{F}(\Lambda)$ determines $\Lambda$, as the slope of the tangent to  $\mathcal{F}(\Lambda)$ records the flow parameter;
\item $\mathcal{F}(\Lambda)$ is smooth away from finitely many semicubical cusps;
\end{enumerate} 

On the other hand, fronts in partial open books have some new features:
\begin{enumerate}

\item the slope of $\mathcal{F}(\Lambda)$ is negative except where it has an endpoint on the image of $\mathcal{C}$; this follows from the description of $W$ as a quotient of the $\{x<0 \}$ half space in $\mathbb{R}^3$.

\item for $t\in (0,1)$, the slope of $\mathcal{F}(\Lambda)$ is bounded from above by $-\epsilon <0$, as a slope limiting to $0$ corresponds to a Legendrian curve approaching the index $0$ critical point  and $R$ has an open neighbourhood around each index $0$ critical point; 
\item  if $\Lambda$ intersects a core circle $C$ on the $t_0$ page, then $\mathcal{F}(\Lambda)$ will have a pair of \textit{teleporting endpoints} at height $t_0$:  $\mathcal{F}(\Lambda)$ will approach a  curve on the Morse diagram corresponding to $C$ from the left and the other  curve corresponding to $C$ from the right.

\end{enumerate} 
 
 \subsection{Reidemeister moves}
 
 The Reidemeister moves established for Legendrian links in closed contact manifolds extend to a family of moves for fronts of properly embedded Legendrian tangles.

\begin{proof}[Proof of theorem \ref{thm:Legendrian_moves}]
A complete collection of Legendrian Reidemeister moves  for  front projections of Legnedrian knots in open books is given in \cite{Gay_Licata15} and shown  in Figure~\ref{fig:Legendrian_moves} (S, H, K moves).  Since we now consider contact manifolds with convex boundary, we may extend this analysis to properly embedded Legendrian tangles.  The interior of $M(S,P,h)$ is indistinguishable from the interior of a closed contact manifold, so the only  new behaviour on fronts occurs at the boundary of the Morse diagram. Whether one considers these to be new moves is a question of taste; each of the moves listed below is simply the restriction to a Morse diagram for a partial open book of a planar isotopy on a Morse diagram for an ordinary open book.  

\begin{figure}
\begin{center}
\includegraphics[scale=0.7]{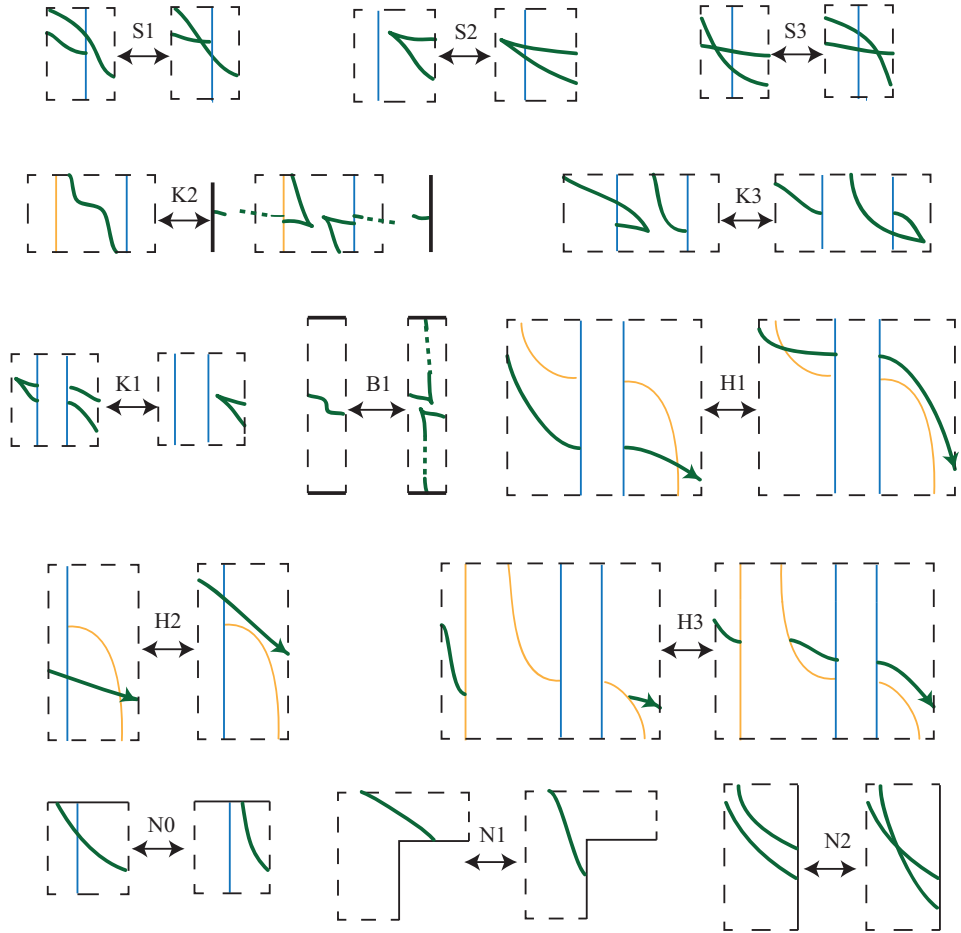}
\caption{Moves for Legendrian links and tangles}
\label{fig:Legendrian_moves}
\end{center}
\end{figure}

   The boundary of the Morse diagram has  three distinct pieces: the \textit{floor}, which is the image under the flow by $V$ of $S\times \{-1\} \setminus h(P)$; the \textit{ceiling}, which is the image of $S\times \{0\} \setminus P$; and the \textit{walls}, which are the image of $\partial P\setminus(\partial S\cap \partial S)\times [0,1]$. In addition to moves on the interior of the diagram which alter the combinatorics of the curves and projection, we see the following moves near the boundary of the Morse diagram:
   
\textbf{Move N0:}  The endpoint of a curve on the front may slide freely along a component of the floor, the ceiling, or a wall, either crossing or teleporting at any trace curve encountered on a floor or ceiling.

\textbf{Move N1:}  The endpoint of a curve on the front may slide right from the ceiling onto a wall and vice versa or left from the floor onto a wall and vice versa. 

\textbf{Move N2:} : Given two curves whose endpoints are near each other on the boundary of the Morse diagram, one may isotope the endpoints past each other, introducing a crossing in the curves.  

 \end{proof}

Move N2 move is reversible, and we note that  it allows $n$ parallel strands with adjacent endpoints may be replaced by the front projection of an arbitrary positive braid.  If performing this isotopy in real time, the slopes at the endpoints must be distinct at the moment of superposition to ensure that the endpoints of the Legendrian curves remain disjoint.

\section{Examples}
\label{sec:examples}

We consider some examples of simple extendable partial open books, Morse structures and front projections.

\begin{example}[Empty monodromy]
Suppose we have a partial open book $(S,P,h)$ where $P$ is empty. Then $h$  is trivially extendable. It is not difficult to see then that $M(S,P,h)$ is just $S \times [-1,1]/\sim$, with dividing set $\partial S \times \{-1/2\}$. Legendrian fronts exist for any Legendrian knots avoiding the skeleton, and as $P$ is empty there is no issue with maximum slope.
\end{example}

\begin{example}[Tight ball]
\label{eg:tight_ball} 
This example also appears in \cite{Etgu_Ozbagci11}. Let $S$ be an annulus and $P$ a thickened properly embedded arc. Let $h$ be a positive Dehn twist, as shown in figure \ref{fig:tight_ball}. A Morse diagram is shown in figure~\ref{fig:tight2}, together with initial and final pages. 
\begin{figure}
\begin{center}
\includegraphics[scale=0.4]{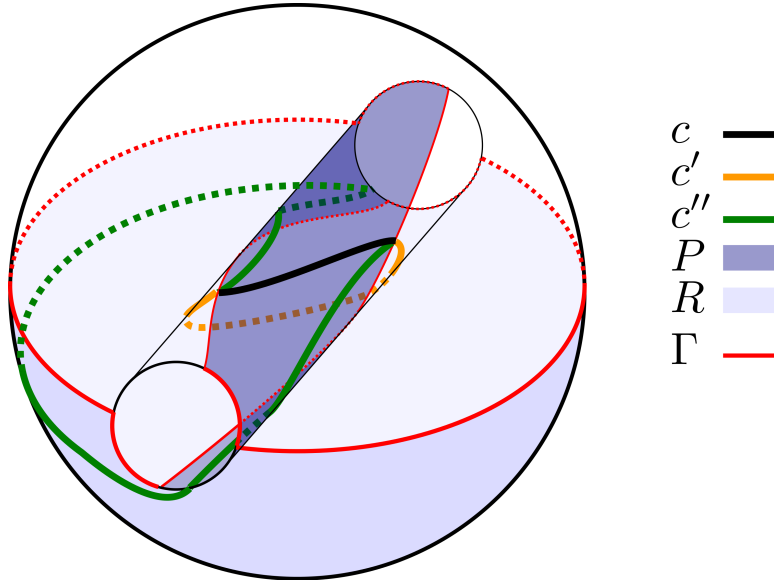}
\caption{The tight ball of example \ref{eg:tight_ball}.}
\label{fig:tight_ball}
\end{center}
\end{figure}

\begin{figure}
\begin{center}
\includegraphics[scale=0.7]{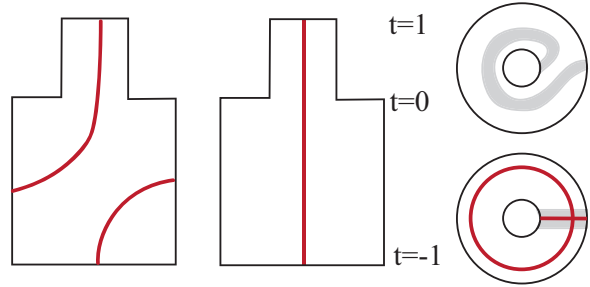}
\caption{A Morse diagram (left) and pages showing initial and final slices (right).  Note that the shaded region in the top right actually the image of $P$ under $h^{-1}$, as required by the identification conventions for the mapping torus.} 
\label{fig:tight2}
\end{center}
\end{figure}

To see why we obtain an tight 3-ball, consider a standard tight contact 3-ball $B$ with connected boundary dividing set $\Gamma$, and positive region $R_+$ a disc. Take a Legendrian arc $\gamma$ properly embedded in $B$, with endpoints on $\Gamma$. Drill out a small tubular neighbourhood $T$ of $\gamma$. Then the dividing set on the resulting surface is shown in figure \ref{fig:tight_ball}. The tube has boundary a cylinder, which is cut into two rectangles by the dividing set. One of these rectangles is $P$. The tube can be regarded as $P \times [0,1]$, and its complement can be regarded as $S \times [-1,0]$ where $S$ is an annulus, consisting of $P$ together with $R=R_+$. A co-core arc $c$ as shown, when pushed across the tube to $c'$, is isotopic in the complement of $T$ to the arc $c''$ on $S$. Then the monodromy takes $c''$ to $c$.
\end{example}

\begin{example}[$S^2 \times I$]
Let $S$ be a disc, $P$ a thickened properly embedded arc. Then $h$ must be isotopic to the identity. So $M(S,P,h)$ consists of a ball $D^2 \times [-1,0]$, with a disc $P \times [0,1]$, glued to a closed curve on its boundary, forming an $S^2 \times I$. This in fact extends to the identity $\tilde{h} : S \rightarrow S$, which produces the tight $S^3$, and hence the contact structure here is the unique tight one. \end{example}

\begin{example}[Overtwisted ball]
\label{eg:overtwisted_ball}
Let $S$ again be an annulus and $P$ a thickened properly embedded arc, as in lower right picture in  figure \ref{fig:tight2}, but now let $h$ be a negative Dehn twist. A Morse diagram is shown in figure~\ref{fig:OTball}, together with a $tb=0$ unknot bounding an overtwisted disc.

\begin{figure}
\begin{center}
\includegraphics[scale=0.4]{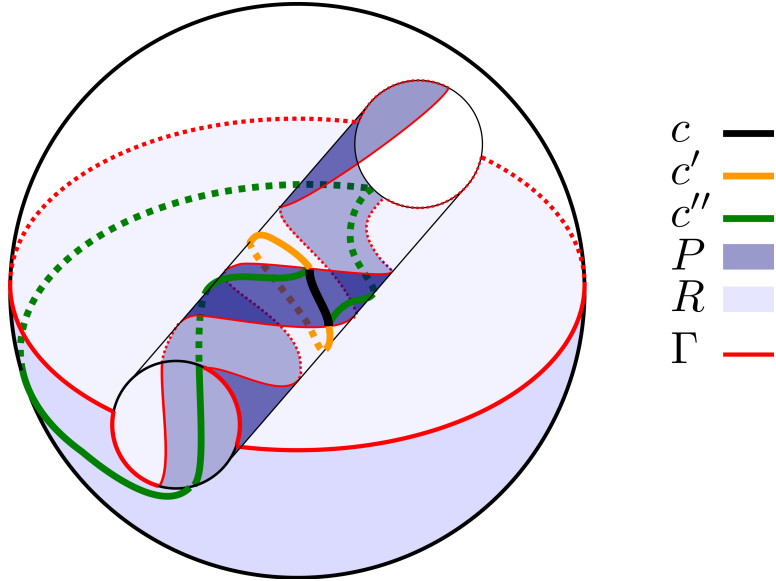}
\caption{The overtwisted ball of example \ref{eg:overtwisted_ball}.}
\label{fig:overtwisted_ball}
\end{center}
\end{figure}

\begin{figure}
\begin{center}
\includegraphics[scale=0.7]{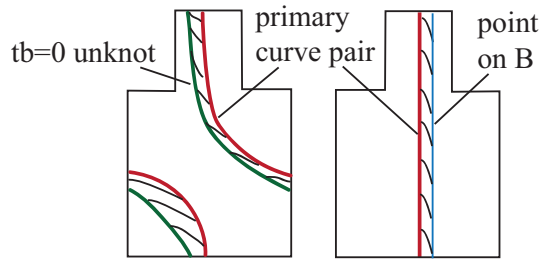}
\caption{An overtwisted disc in  Example~\ref{eg:overtwisted_ball}. The boundary of the disc is parallel to one of the primary curves, and the thinner lines indicate a foliation of the disc by Legendrian curves that teleport across the primary curve and meet at a point on the binding represented by a vertical line. (Example due to Dave Gay.)}
\label{fig:OTball}
\end{center}
\end{figure}

The manifold $M(S,P,h)$ is shown in figure \ref{fig:overtwisted_ball}. As in example \ref{eg:tight_ball}, we drill a tube $T$ out of a ball. However now the dividing set on the tube twists in the opposite direction (the ``wrong way") around the tube. Thus the ball is overtwisted: even if both the tube and its complement are tight, one can find an attaching arc on the tube containing bypasses on both sides. One can again take a co-core curve $c$, trace it through $T$ to $c'$ and through the complement of $T$ to $c'' \subset S$ to show that the monodromy is the restriction of a left-handed Dehn twist.

Indeed, a Legendrian unknot of Thurston-Bennequin number zero can be seen explicitly from its front projection. The leftwards direction of the Dehn twist means that we can draw the front shown in figure~\ref{fig:OTball}.  This unknot  avoids all curves of the Morse diagram and bounds an overtwisted disc that lies in a subset of $W$.  This disc can be seen explicitly on the Morse diagram, as an overtwisted disc admits a radial foliation by Legendrian curves, each of which can also be projected to the diagram.  These curves terminate on a vertical line which represents a single point on $B$.
\end{example}

\begin{example}
\label{eg:wtf}
We conclude with an example which breaks several of the conventions already established, but nevertheless illustrates an interesting phenomenon.  The right hand pictures in Figure \ref{fig:nonexex}  show  initial and final pages specifying a monodromy $h$ which is not extendable.  By way of proof, consider an arc in $S\setminus P$ connecting two distinct boundary components; no arc with the same endpoints exists in $S\setminus h(P)$.
On the other hand, 
this monodromy nonetheless appears to have a perfectly valid Morse diagram, in the sense that the left hand figure defines a sequence of handle slides and isotopies taking the initial core complex to the terminal one. Examples such as these may be interesting for further study.

\begin{figure}[h]
\begin{center}
\scalebox{.6}{\includegraphics{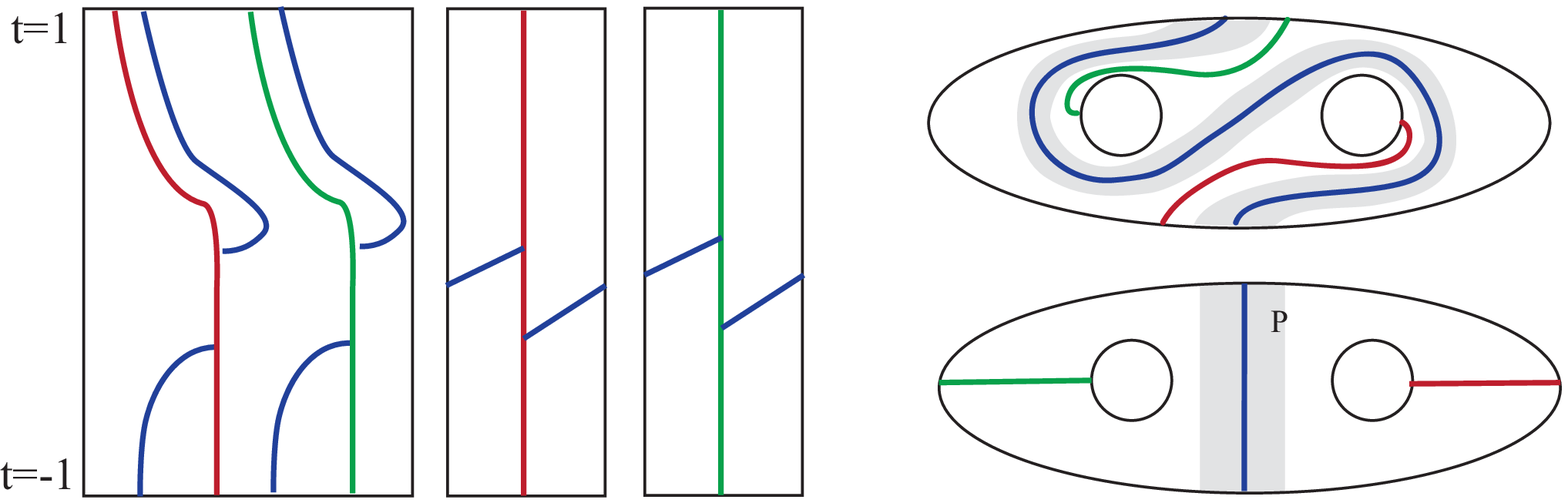}}
\caption{Example~\ref{eg:wtf}
}\label{fig:nonexex}
\end{center}
\end{figure}

\end{example}

\bibliography{biblio}
\bibliographystyle{spmpsci}

\end{document}